\documentclass[12pt,a4paper]{report}
\usepackage[utf8]{inputenc}
\usepackage[french]{babel}
\usepackage{graphicx}
\usepackage{layout}
\usepackage{color}
\usepackage[T1]{fontenc}
\usepackage{stix2}
\usepackage{enumitem}
\usepackage{hyperref}
\usepackage{amsfonts}
\usepackage[all]{xy}
\usepackage{stmaryrd}
\usepackage{amsmath,amscd}
\hypersetup{
    colorlinks=true,
    linkcolor=blue,
    filecolor=magenta,
    urlcolor=cyan,
    citecolor=blue,
}

\usepackage[left=2.4cm,right=2.4cm,top=3cm,bottom=4cm]{geometry}

\usepackage{hyperref}
\usepackage{fancyhdr}
\usepackage{amsmath,amsfonts,amsthm}
\usepackage{setspace}
\usepackage[utf8]{inputenc}
\usepackage[french]{babel}
\usepackage{graphicx}
\usepackage{layout}
\usepackage{color}
\usepackage[T1]{fontenc}
\usepackage{stix2}
\usepackage{enumitem}
\usepackage{hyperref}
\usepackage{amsfonts}
\usepackage[all]{xy}
\usepackage{stmaryrd}
\usepackage{amsmath,amscd}
\hypersetup{
    colorlinks=true,
    linkcolor=blue,
    filecolor=magenta,
    urlcolor=cyan,
    citecolor=blue,
}

\usepackage[left=2.4cm,right=2.4cm,top=3cm,bottom=4cm]{geometry}

\usepackage{hyperref}

\renewcommand\thechapter{\arabic{chapter}}

\newtheorem*{pf}{Proof of theorem 2}
\newtheorem{Th}{Theorem}[chapter]
\newtheorem*{ThR}{Theorem of Roth [9]}

\newtheorem{Cor}{Corollary}[chapter]

\newtheorem{Lem}{Lemma}
\newtheorem{Exp}{Example}
\newtheorem{Rem}{Remark}
\newtheorem{Rems}{Remarks}
\newtheorem{p}{Proof of theorem 1 }

\renewcommand\thesection{\ifnum \value{chapter}>0 \thechapter.\fi\arabic{section}}

\renewcommand\theDef{\ifnum \value{chapter}>0 \thechapter.\fi\arabic{Def}}

\renewcommand\theRem{\ifnum \value{chapter}>0 \thechapter.\fi\arabic{Rem}}

\renewcommand\theCor{\ifnum \value{chapter}>0 \thechapter.\fi\arabic{Cor}}

\renewcommand\theTh{\ifnum \value{chapter}>0 \thechapter.\fi\arabic{Th}}

\renewcommand{\theRem}{\empty{}}

\newcommand{\poubelle}[1]{}
\begin{document}
\renewcommand{\proofname}{\textbf{Proof}}

 \title{Transcendence theorem}
 \begin{center}
 \textbf{\Large{Transcendence of some infinite series}}
 \end{center}

 \begin{center}
     \textbf{\large{Fedoua Sghiouer, Kacem Belhroukia, Ali Kacha }}
 \end{center}

 \begin{center}
     Department of Mathematics, Ibn Tofail University,\\
     14 000 Kenitra, Morocco \\
     E-mail: fedoua.sghiouer@uit.ac.ma\\
     E-mail: belhroukia.pc@gmail.com\\
     E-mail: ali.kacha@uit.ac.ma\\

 \end{center}

 \begin{center}
     \textbf{\large{Abstract}}
 \end{center}

In the present paper and as an application of Roth's theorem concerning the rational approximation of algebraic numbers, we give a sufficient condition that will assure us that a series of positive rational terms is a transcendental number. With the same conditions, we establish a transcendental measure of $ \sum_{n = 1}^{\infty} 1/a_n$. \\

\hspace*{-0.6cm}\textbf{Keywords:} Infinite series, transcendental number, transcendental measure.

\section{Introduction and preliminaries }

The theory of transcendental numbers has a long history. We know since J. Liouville in 1844 that the very rapidly converging sequences of rational numbers provide examples of transcendental numbers. So, in his famous paper [6], Liouville showed that a real number admitting very good rational approximation can not be algebraic, then he explicitly constructed the first examples of transcendental numbers.\\

There are a number of sufficient conditions known within the literature for an infinite series, $ \sum_{n = 1}^{\infty} 1/a_n$, of positive rational numbers to converge to an irrational number, see [2, 8, 10].These conditions, which are quite varied in form, share one common feature, namely, they all require rapid growth of the sequence ${ a_n }$ to deduce irrationality of the series. As an illustration consider the following results of J. Sandor which have been taken from [10] and [11].\\

From this direction, the transcendence of some infinite series has been studied by several authors such as M.A. Nyblom [7], J. Hančl and J. Štěpnička [4]. we also note that the transcendence of some power series with rational coefficients is given by some authors such as J. P. Allouche [1] and G. K. Gözer [3].

\newpage
\begin{ThR}
\normalfont
 Let $ \alpha $ be a real number, $ \delta $ a real number > 2, if there exists an infinity rational numbers $ \dfrac{p}{q} $ with $ gcd(p, q) = 1 $ such that $$ \left| \alpha - \frac{p}{q} \right| < \frac{1}{q^{ \delta} }, $$

then $ \alpha $ is a transcendental number.
\end{ThR}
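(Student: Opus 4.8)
The plan is to prove the statement by contraposition: assuming that $\alpha$ is \emph{not} transcendental — that is, $\alpha$ is algebraic — I will derive a contradiction with the hypothesis that infinitely many reduced fractions $p/q$ satisfy $\left|\alpha - \frac{p}{q}\right| < q^{-\delta}$. I would split the argument according to whether $\alpha$ is rational or an algebraic irrational, since the two cases call for completely different tools: the rational case is elementary, while the algebraic irrational case is exactly where Roth's approximation theorem enters.

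First, in the case $\alpha = a/b$ rational, written in lowest terms with $b \geq 1$, I would note that for any reduced fraction $p/q$ with $p/q \neq a/b$ one has $|aq - bp| \geq 1$, hence $\left|\alpha - \frac{p}{q}\right| = \frac{|aq-bp|}{bq} \geq \frac{1}{bq}$. Combining this with the hypothesis gives $q^{\delta-1} < b$, so $q$ stays bounded (here $\delta > 2 > 1$ is used); then $|p| \leq q(|\alpha|+1)$ is bounded as well, leaving only finitely many such fractions. Since at most one reduced fraction equals $a/b$, this contradicts the assumption that infinitely many $p/q$ occur, and disposes of the rational case.

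Next, in the case where $\alpha$ is algebraic of degree $\geq 2$, I would set $\varepsilon := \delta - 2 > 0$ and invoke Roth's theorem in its classical form: for an algebraic irrational $\alpha$ and any $\varepsilon > 0$, the inequality $\left|\alpha - \frac{p}{q}\right| < q^{-2-\varepsilon}$ has only finitely many solutions in reduced fractions. Since $q^{-2-\varepsilon} = q^{-\delta}$, the hypothesis again forces infinitely many solutions of an inequality known to have only finitely many — a contradiction. Combining the two cases, $\alpha$ cannot be algebraic, so $\alpha$ is transcendental.

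The content here is essentially a repackaging of Roth's theorem, so there is no serious obstacle; the only point that needs a line of attention is verifying that "infinitely many fractions $p/q$" genuinely forces infinitely many \emph{distinct} values, which is where the coprimality condition $\gcd(p,q)=1$ — hence uniqueness of representation — is used, both to make the elementary bound $|aq-bp|\geq 1$ meaningful and to align the hypothesis with the finiteness conclusion of Roth's theorem.
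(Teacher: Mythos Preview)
The paper does not supply a proof of this statement at all; it is quoted verbatim as ``Theorem of Roth [9]'' and used as a black box in the proof of Theorem~1. Your proposal is correct as a derivation of the transcendence-criterion formulation from the classical finiteness form of Roth's theorem, together with an elementary handling of the rational case; in substance this is exactly what the paper does too, namely treating Roth's deep result as known. You should be aware, though, that you have not proved Roth's theorem itself --- you have only shown that the paper's formulation follows from the standard one, which is a trivial contrapositive repackaging rather than an independent proof.
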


\poubelle{
\begin{Th}\textbf{( Eisenstein, 1852), ( Heine, 1853 )  }~\\

Any algebraic power series $ f = \sum_{n\geq 0} a_nt^n $ in $ \mathbb{Q}\left[\left[t\right]\right]$ is globally bounded :  There exists an integer $ C > 0 $ such that $ a_nC^n $ is an integer for all $ n \geq 1 $.

\end{Th}

Consequently, the power series:

\begin{enumerate}[label= \textbf{\roman*)}]
    \item $ \sum_n \frac{t^n}{n} $
    \item $ \sum_n \frac{t^n}{n^2 + 1 }$
    \item $ \sum_n \frac{t^n}{n!} $
\end{enumerate}
 are transcendental.
\begin{Rems}~\\
\begin{enumerate}[label= \textbf{\bullet}]
    \item The smallest possible constant C is called Eisenstein constant of $ f $.
    \item Best current bound is, see $\left[ Dwork,\hspace{0.1cm} van \hspace{0.1cm} der \hspace{0.1cm} Poorten \hspace{0.1cm} 1992 \right]$ $$ C \leq 4.8 \left(8e^{-3}D^{4 + 2.74 \log D} e^{1.22D} \right)^D. H^{2D - 1} = e^{O\left(D^2\right)}. H^{2D - 1} $$
    where $ D $ is the degree of the (minpoly of) $ f ,$ and $ H $ the max of its coefficients.
\end{enumerate}

\end{Rems}
}
\poubelle{
\begin{Rem}
The infinite series :

\begin{enumerate}[label= \textbf{\roman*)}]
\item $ \sum_{n \geq 0} \frac{1}{n!} = e,$
\item  $ \sum_{n \geq 0} \frac{1}{n^2 + 1} = \frac{\pi^2}{2} $

\end{enumerate}
are transcendental numbers.

\end{Rem}
}

\section{Main results}

\subsection{Transcendence}
The first main result is given in the following theorem.

\begin{Th}{\it  Let $(a_n)_{n \geq 1}$ be a sequence of non-zero natural integers and $ \alpha $ be a positive real\\ > 2 such that
\begin{equation}
a_{n + 1} > a_n^{\alpha + 1}, \hspace{0.1cm}\text{ for all } n \geq 1. \\
\end{equation}
 Then the series $ \sum_{n = 1}^ \infty 1/{a_n} $ converges to a transcendental number.}
\end{Th}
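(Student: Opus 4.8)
The strategy is to produce, for infinitely many $N$, an excellent rational approximation to $S := \sum_{n=1}^\infty 1/a_n$ and then invoke the stated Theorem of Roth. Let me think about the natural candidate.

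The obvious rational approximation is the partial sum $S_N = \sum_{n=1}^N 1/a_n$. Writing $S_N = p_N/q_N$ in lowest terms, we have $q_N \mid \mathrm{lcm}(a_1,\dots,a_N) \le a_1 a_2 \cdots a_N$. The tail is $S - S_N = \sum_{n=N+1}^\infty 1/a_n$.

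**Bounding the tail.** Using the hypothesis $a_{n+1} > a_n^{\alpha+1}$, the sequence grows doubly-exponentially. In particular $a_{N+1} > a_N^{\alpha+1}$, and more generally $a_{N+k} > a_{N+1}^{(\alpha+1)^{k-1}}$ by iteration. The tail $\sum_{n>N} 1/a_n$ is dominated by a geometric-like series and is therefore $\le 2/a_{N+1}$ (say, for $N$ large, since $a_n \ge 2$ forces $a_{N+1}$ large). So $|S - S_N| \le 2/a_{N+1} < 2/a_N^{\alpha+1}$.

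**The denominator estimate — the main obstacle.** To apply Roth I need $|S - p_N/q_N| < 1/q_N^{\delta}$ for some fixed $\delta > 2$, and I must control $q_N$ from above in terms of $a_{N+1}$. The crude bound $q_N \le a_1\cdots a_N$ is far too weak relative to $a_{N+1}$ unless the product $a_1\cdots a_N$ is itself small compared to a fixed power of $a_N$. But the hypothesis $a_{n+1} > a_n^{\alpha+1}$ with $\alpha > 2$ gives exactly the right kind of control: since $a_n > a_{n-1}^{\alpha+1} > \cdots$, one shows by induction that $a_1 a_2 \cdots a_{N-1} < a_N^{\,c}$ for a constant $c = c(\alpha) < 1$ — indeed, roughly $\sum_{j<N} \log a_j < \log a_N \cdot \sum_{k\ge 1}(\alpha+1)^{-k} = \frac{1}{\alpha}\log a_N$. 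Hence $q_N \le a_1\cdots a_N < a_N^{1 + 1/\alpha}$. This is the step that must be executed carefully; it is the crux.

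**Closing the argument.** Combining the two estimates: $|S - p_N/q_N| < 2/a_N^{\alpha+1}$, while $q_N^\delta < a_N^{(1+1/\alpha)\delta}$. It therefore suffices to choose $\delta$ with $2 < \delta$ and $(1 + 1/\alpha)\delta \le \alpha + 1$, i.e. $\delta \le \frac{\alpha(\alpha+1)}{\alpha+1} = \alpha$ — wait, more precisely $\delta \le (\alpha+1)/(1+1/\alpha) = \alpha$. Since $\alpha > 2$, such a $\delta$ exists (take $\delta$ slightly above $2$, absorbing the harmless factor $2$ in the tail bound by enlarging $q_N^\delta$ for $N$ large). One must also check the approximations $p_N/q_N$ are genuinely distinct for infinitely many $N$ — which holds because $S_N$ is strictly increasing and converges, so it takes infinitely many distinct values — and that $p_N/q_N$ is in lowest terms, which is automatic by construction. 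Then the Theorem of Roth applies and yields that $S$ is transcendental. Convergence of the series is immediate from the tail bound. I expect the denominator estimate $q_N < a_N^{1+1/\alpha}$ and the bookkeeping of the constant factor $2$ (to keep a clean inequality $|S - p_N/q_N| < 1/q_N^\delta$ with $\delta>2$) to be the only real points requiring care.
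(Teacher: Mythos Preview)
Your proposal is correct and follows essentially the same approach as the paper: approximate $\theta$ by the partial sums $p_N/q_N$, bound $q_N\le a_1\cdots a_N$, exploit the growth hypothesis to show the tail is small enough relative to $q_N^\alpha$, and invoke Roth. The only organizational difference is that the paper packages the key estimate as $b_m:=(a_1\cdots a_m)^\alpha/a_{m+1}\to 0$ (its Lemma~2) and then shows $q_m^\alpha|\theta-p_m/q_m|<b_m(1+\theta)$, whereas you first prove the sharper pointwise bound $q_N\le a_1\cdots a_N<a_N^{(\alpha+1)/\alpha}$ and then use $a_N^{\alpha+1}<a_{N+1}$ together with a crude tail bound $\le 2/a_{N+1}$. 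Your bound $q_N<a_N^{(\alpha+1)/\alpha}$ is in fact exactly the content of the paper's Lemma~3(i), which the authors prove later for their transcendence-measure theorem; so you are simply front-loading that estimate. The one point you should make explicit when writing this up is that $q_N$ is unbounded (needed to absorb the constant $2$ into $q_N^{\alpha-\delta}$): this follows since the $S_N$ are distinct rationals converging to $S$, and rationals with bounded denominator cannot accumulate.
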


 In order to prove Theorem 1, we need some preliminary lemmas.
\begin{Lem}{\it
Let $$ \frac{p_m}{q_m} = \sum_{k = 1}^m \frac{1}{a_k} $$ such that $ (p_m, q_m) = 1$. Then, we have
\begin{equation}
   q_m \leq a_1a_2...a_m.
\end{equation}}
\end{Lem}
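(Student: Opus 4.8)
The idea is simply to exhibit $\sum_{k=1}^m 1/a_k$ as a fraction whose denominator is $a_1a_2\cdots a_m$, and then invoke the elementary fact that the reduced denominator of a rational number divides the denominator of \emph{any} representation of that number. Concretely, I would put the partial sum over the obvious common denominator:
\[
\sum_{k=1}^m \frac{1}{a_k} \;=\; \frac{\displaystyle\sum_{k=1}^m \;\prod_{\substack{1\le j\le m\\ j\ne k}} a_j}{\;a_1a_2\cdots a_m\;}.
\]
Call the numerator on the right $N_m$ (a non-negative integer) and the denominator $D_m=a_1a_2\cdots a_m$ (a positive integer, since each $a_k\ge 1$). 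Thus $p_m/q_m = N_m/D_m$.

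Next, since $\gcd(p_m,q_m)=1$, cross-multiplying $p_m D_m = q_m N_m$ and using that $q_m$ is coprime to $p_m$ forces $q_m \mid D_m$. As $D_m\ge 1$, this yields $q_m \le D_m = a_1a_2\cdots a_m$, which is exactly inequality (2).

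If one prefers an inductive phrasing, the same conclusion follows by induction on $m$: for $m=1$ we have $q_1\mid a_1$; and for the step, writing $\frac{p_m}{q_m}=\frac{p_{m-1}}{q_{m-1}}+\frac{1}{a_m}=\frac{a_m p_{m-1}+q_{m-1}}{q_{m-1}a_m}$, the reduced denominator $q_m$ divides $q_{m-1}a_m$, hence $q_m\le q_{m-1}a_m\le (a_1\cdots a_{m-1})a_m$ by the inductive hypothesis.

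There is essentially no real obstacle here; the only point requiring a word of care is the standard divisibility lemma (the reduced denominator divides every denominator), and the harmless observations that $D_m\ge 1$ and that $q_m$ is a genuine positive integer so that the inequality $q_m\le D_m$ is meaningful.
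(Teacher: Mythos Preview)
Your proof is correct and follows essentially the same route as the paper's: both argue that, since $p_m/q_m$ is in lowest terms, the reduced denominator $q_m$ must divide any common denominator of the summands $1/a_1,\dots,1/a_m$, in particular the product $a_1a_2\cdots a_m$. The paper compresses this into two sentences by invoking the ``lowest common denominator'' directly, whereas you spell out the common-denominator representation and the divisibility step explicitly (and also supply an optional inductive variant); the underlying idea is identical.
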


\begin{proof}
Since $ (p_m, q_m) = 1$,  the lowest common denominator of the fraction $\dfrac{1}{a_1}...\dfrac{1}{a_m}$ must be greater than or equal to $ q_m $. So we deduce $ q_m \leq a_1a_2...a_m$.

\end{proof}

\begin{Lem}{\it
Let $ (a_n)_{n \geq 1}$ be a sequence of natural integers > 0, and $\alpha$ be a given real > 2. The hypothesis $ a_{n+1} > a_n^{\alpha + 1}$ implies that
\begin{equation}
    \lim\limits_{n \rightarrow \infty} \frac{\left(a_1a_2...a_n \right)^\alpha}{a_{n+1}} = 0
\end{equation}}

\end{Lem}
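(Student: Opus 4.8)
The plan is to pass to logarithms and reduce the claim to an additive statement about the growth of $\log a_n$. Write $P_n = a_1 a_2 \cdots a_n$, set $L_k = \log a_k \ge 0$ and $S_n = \sum_{k=1}^{n} L_k$, so that $\log\bigl(P_n^{\alpha}/a_{n+1}\bigr) = \alpha S_n - L_{n+1}$; the assertion that $(a_1\cdots a_n)^{\alpha}/a_{n+1}\to 0$ is then exactly $\alpha S_n - L_{n+1} \to -\infty$. After taking logs the hypothesis $a_{n+1} > a_n^{\alpha+1}$ reads $L_{n+1} > (\alpha+1)L_n$, so the exponents $L_k$ grow at least geometrically with ratio $\alpha+1 > 1$. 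Since $a_2 > a_1^{\alpha+1} \ge 1$ forces $a_n \ge 2$ for all $n \ge 2$, the $L_n$ are eventually positive and strictly increasing, which is what makes the geometric comparison below meaningful.

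First I would bound the partial sum $S_n$ by its last term. Iterating $L_k < L_{k+1}/(\alpha+1)$ downward from $k=n$ gives $L_{n-j} < L_n/(\alpha+1)^{j}$, whence
\[
S_n = \sum_{j=0}^{n-1} L_{n-j} < L_n \sum_{j=0}^{\infty}\frac{1}{(\alpha+1)^{j}} = \frac{\alpha+1}{\alpha}\,L_n .
\]
Multiplying by $\alpha$ yields $\alpha S_n < (\alpha+1)L_n$, and combined with $L_{n+1} > (\alpha+1)L_n$ this already produces $\alpha S_n - L_{n+1} < (\alpha+1)L_n - L_{n+1} < 0$, i.e.\ the uniform bound $P_n^{\alpha}/a_{n+1} < 1$. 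Equivalently, setting $c_n = P_n^{\alpha}/a_{n+1}$ one checks the clean telescoping identity $c_n/c_{n-1} = a_n^{\alpha+1}/a_{n+1} < 1$, so the sequence $(c_n)$ is positive and strictly decreasing and therefore converges to some limit in $[0,1)$.

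The decisive step, and the one I expect to be the real obstacle, is to upgrade this convergence to convergence to $0$. Taking logs of the telescoping identity gives $-\log c_n = -\log c_1 + \sum_{k=2}^{n}\log\bigl(a_{k+1}/a_k^{\alpha+1}\bigr)$, so $c_n \to 0$ is equivalent to divergence of the series $\sum_k \log\bigl(a_{k+1}/a_k^{\alpha+1}\bigr)$ of nonnegative terms, that is, to the accumulated strictness of the inequalities $a_{k+1} > a_k^{\alpha+1}$ being unbounded. My plan here would be to render the growth hypothesis quantitative: I would try to bound $\log\bigl(a_{k+1}/a_k^{\alpha+1}\bigr)$ below by a non-summable quantity so that the series diverges and $\alpha S_n - L_{n+1} \to -\infty$. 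This is exactly the point at which the bare exponent $\alpha+1$ sits at the threshold dictated by the geometric domination of $S_n$, so the step is delicate; I anticipate it will require either a slightly strengthened growth input (for instance $a_{k+1}/a_k^{\alpha+1} \to \infty$, or an exponent $\alpha+1+\varepsilon$) or a sharper accounting of the slack in the comparison $S_n < \tfrac{\alpha+1}{\alpha}L_n$, and closing this final gap is where the main effort of the proof lies.
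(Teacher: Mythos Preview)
Your route via logarithms and the telescoping identity is exactly the paper's: writing $b_n=(a_1\cdots a_n)^{\alpha}/a_{n+1}$, the paper obtains $\log(1/b_n)=\log a_1+\sum_{k=1}^{n}\log\bigl(a_{k+1}/a_k^{\alpha+1}\bigr)$ and then must show this sum diverges. The only difference is at the final step. Where you pause and flag the divergence of $\sum_k\log\bigl(a_{k+1}/a_k^{\alpha+1}\bigr)$ as the delicate point, the paper simply asserts that since each ratio $a_{k+1}/a_k^{\alpha+1}$ exceeds $1$ there is a \emph{uniform} $\delta>0$ with $a_{k+1}/a_k^{\alpha+1}>1+\delta$ for all $k$, whence $\log(1/b_n)\ge n\log(1+\delta)\to\infty$.

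That uniform bound does not follow from the hypothesis, and your suspicion that a strengthened growth input is needed is in fact correct: the lemma as stated is false. Take $\alpha=3$, $a_1=2$, and $a_{n+1}=a_n^{4}+1$. Then $a_{n+1}>a_n^{\alpha+1}$ holds for every $n$, yet $a_{k+1}/a_k^{4}=1+a_k^{-4}\to 1$ and $\sum_{k\ge 1}\log\bigl(1+a_k^{-4}\bigr)<\sum_{k\ge 1}a_k^{-4}<\infty$, so $b_n$ decreases to a strictly positive limit (numerically about $0.47$) rather than to $0$. You have not missed an idea; the paper's argument has a genuine gap exactly where you identified it, and closing it requires an additional hypothesis such as $a_{n+1}/a_n^{\alpha+1}\to\infty$ or $a_{n+1}\ge a_n^{\alpha+1+\varepsilon}$ for some fixed $\varepsilon>0$.
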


\begin{proof}
We put $$ b_n = \frac{\left(a_1a_2...a_n \right)^{\alpha}}{a_{n+1}}, $$ and we show that $ \lim\limits_{n \rightarrow \infty} b_n = 0.$ We have
\begin{align*}
\ln \left(\frac{1}{b_n} \right) &= \ln \left(a_{n+1} \right) - \alpha \sum_{k = 1}^n \ln \left(a_k\right)\\
&= \sum_{k=1}^n \ln \left(\frac{a_{k+1}}{a_k} \right) + \ln\left(a_1\right) - \alpha \sum_{k=1}^n \ln\left(a_k\right)\\
&= \sum_{k=1}^n \ln\left(\frac{a_{k+1}}{a_k^{\alpha + 1}} \right) + \ln \left(a_1\right)\\
&\geq \sum_{k=1}^n \ln\left(\frac{a_{k+1}}{a_k^{\alpha + 1}}\right).
\end{align*}
Since $ \dfrac{a_{n+1}}{a_{n}^{\alpha +1}} > 1, $ then there exists $  \delta > 0 $ such that $ \dfrac{a_{n+1}}{a_{n}^{\alpha +1}} > 1 + \delta.$ Therefore, we get
\begin{equation*}
\ln\left(\frac{1}{b_n}\right) \geq n \ln\left( 1 + \delta \right).
\end{equation*}

From this, we deduce that, $ \lim\limits_{n \rightarrow +\infty} \ln\left(\dfrac{1}{b_n}\right) = +\infty  $, then $ \lim\limits_{n \rightarrow +\infty} b_n = 0$.\\
\end{proof}

\begin{p}\normalfont
According to the hypothesis, the series $ \sum_n \dfrac{1}{a_n}$ is convergent.\\
Put $ \theta = \sum_{n = 1} ^\infty \dfrac{1}{a_n} $ and $ \dfrac{p_m}{q_m} = \sum_{n = 1}^m \dfrac{1}{a_n} $. From the equality,

\begin{equation*}
\left| \theta - \frac{p_m}{q_m} \right| = \sum_{n = m +1}^\infty \frac{1}{a_n},
\end{equation*}
we obtain
\begin{equation*}
q_m^\alpha \left| \theta - \frac{p_m}{q_m} \right| = \sum_{n = m + 1}^\infty \frac{q_m^\alpha}{a_n}. \\
\end{equation*}
The relationship (2) implies that
\begin{equation*}
q_m^\alpha \left| \theta - \frac{p_m}{q_m} \right| \leq \left( a_1a_2...a_m \right)^\alpha
\end{equation*}
\begin{equation*}
\sum_{n = m + 1}^\infty \frac{1}{a_n} \leq b_m \sum_{n = m + 1}^\infty \frac{a_{m+1}}{a_n},
\end{equation*}
with $ b_m = \dfrac{\left(a_1a_2...a_m \right)^\alpha}{a_{m+1}}.$\\
Furthermore, we have
\begin{equation}
    \frac{a_n}{a_{n+1}} < \frac{1}{a_n^\alpha} < \frac{1}{a_n}, \hspace{0.08cm} all\hspace{0.08cm} n \geq 1.
\end{equation}
Then, we obtain
\begin{align*}
    q^\alpha_m \left| \theta - \frac{p_m}{q_m} \right| &< b_m \left( 1 + \sum_{k = 1}^\infty \frac{a_{m+1}}{a_{m+k+1}} \right)\\
    &< b_m \left( 1 + \sum_{k=1}^\infty \frac{a_{m+k}}{a_{m+k+1}} \right)\\
    &< b_m \left( 1 + \sum_{k=1}^\infty \frac{1}{a_{m+k}} \right) \\
    &< b_m \left( 1 + \theta \right). \hspace{2cm}
\end{align*}

According to the relationship (3), and for m sufficiently large, we get $ b_m < \left(1 + \theta \right)^{-1}.$\\

Therefore for m sufficiently large, we have $$ q_m^\alpha \left| \theta - \frac{p_m}{q_m}\right| < 1. $$

Finally we find
\begin{equation}
   \left| \theta - \frac{p_m}{q_m}\right| < \frac{1}{q_m^\alpha}.
\end{equation}

Since $ \alpha > 2 $, then by Roth's theorem, $ \theta $ is a transcendental number.\\

\end{p}
\poubelle{
\chapter*{Application}
\addcontentsline{toc}{chapter}{Application}

we shall demonstrate via theorem 1, the transcendence of a family of series involving the generalised Fibonacci and Lucas sequences, denoted by $ U_n $ and $ V_n $ respectively.  Let $ \left(P, Q\right)$ be a relatively prime pair of integers such that the roots $ \alpha $ and $ \beta $ of $ x^2 - Px + Q = 0 $ are distinct, then $U_n, V_n$ are given by $$ U_n = \frac{\alpha^n - \beta^n}{\alpha - \beta} \hspace{1cm} and \hspace{1cm}  V_n = \alpha^n + \beta^n $$

It is well known that when the discriminant $ \Delta = P^2 - 4Q > 0$  both $\{U_n\}$ and $ \{V_n\} $ are increasing sequences of positive integers. In particular, for $ \left(P,Q\right) = \left(1,-1\right)$ one has $ U_n = F_n $ and $ V_n = L_n,$ where $F_n$ and $L_n$ are the Fibonacci and Lucas numbers respectively. The irrationality of the following series $$ \sum_{n = 1}^\infty \frac{1}{U_{f(n)}},$$

where $ f: \mathbb{N} \longrightarrow \mathbb{N}$ satisfies $ f( n + 1 ) \geq 2f(n),$ was proved by McDaniel in ... This result however has been proved more generally by Badea i ... for sequence $ \{x_n\}$ generated from a second order recurrence relation of the form $x_{n+1} = ax_n + bx_{n-1}$, where $x_0 = 0, x_1 = 1 $ and a, b are fixed positive integers. In particular, it was shown that if $n(k)$ is a positive integer sequence such that $ n(k+1) \geq 2n(k)-1$ for k sufficiently large then $\sum_{k=1}^\infty \frac{1}{x_{n(k)}}$ is irrational. A similar result has been found in ... were irrationality of the series $ \sum_{n=1}^\infty 1/H\left( f(n) \right) $ was established for $ \{H(k)\}$ a sequence of positive integers satisfying a homogeneous linear recurrence relation and $f(.)$ an integer valued function such that $ f(n+1) - 2f(n) \longrightarrow \infty$ as $ n \longrightarrow \infty$ and $f(n+1) \geq f(n)^2$ for all sufficiently large n. It should further be noted, that analogous irrationality results have also been found in ..., for series whose terms are reciprocals of elements of an integer sequence generated from a linear recurrence of order $ k \geq 2.$ However, the sufficiency conditions obtained there were of a more complex nature than those described in above. To contrast these general results of ..., we now establish the transcendence of $\sum_{n=1}^\infty 1/U_{f(n)}$ and $\sum_{n=1}^\infty 1/V_{f(n)},$ where the index function $f(.)$ satisfies for a fixed $ \lambda > 2$ the inequality $ f(n+1) \geq (\lambda + 2 ) f(n).$

\begin{Cor}
Let $\left(P,Q\right)$ be a relatively prime pair of integers with $ P > \left| Q + 1 \right| $ and $ Q \neq 1$ and $ \{U_n\}, \{V_n\} $ the associated generalised Fibonacci and Lucas sequences. If, for a fixed\\
\lambda > 2, the function $ f: \mathbb{N} \Longrightarrow \mathbb{N}$ has the property that $f(n+1) \geq \left(\lambda + 2\right) f(n) $, then the series $ \sum_{n=1}^\infty 1/a_n$ converges to a transcendental number, where $ a_n = U_{f(n)} $ or  $ a_n = V_{f(n)}. $
\end{Cor}

\begin{Rem}
We note that the restriction on Q is required as the sequence $\{U_n\}$ will contain infinitely many zero elements when $\left(P,Q\right) = \left(1,-1\right). $
\end{Rem}

\begin{proof}
In view of Theorem... it will suffice to demonstrate in either case that\\
$ a_{n+1}/a_n^{\lambda +1} \longrightarrow \infty$ as $n \longrightarrow \infty $. Clearly, from definition $\alpha = \left( P + \sqrt{\Delta} \right)/2$ and $ \beta =  \left( P - \sqrt{\Delta} \right)/2 $ where $ \Delta = P^2 - 4Q.$ Now from assumption $ \sqrt{\Delta} > \sqrt{\left( Q + 1 \right)^2 - 4Q } = \left| Q - 1 \right| > 1 $ and so $$ \left|\beta\right| = \left| \frac{P - \sqrt{\Delta}}{2} \right| = \frac{\left|2Q\right|}{P + \sqrt{\Delta}} < \frac{\left|2Q\right|}{\left| Q + 1 \right| + \left| Q - 1 \right|} = 1.$$
noting here that the right hand equality holds for all $Q \in \mathbb{R} $ with $\left| Q \right| \geq 1.$ Consequently, $ \left| \alpha \right| = \left|Q \right|/\left|\beta\right| > \left| Q \right| \geq 1$ and $ \left| \beta / \alpha \right| < 1.$ Now, in the case when $ a_n = U_{f(n)}$ observe $$ \frac{a_{n+1}}{a_n^{\lambda + 1}} = \alpha^{f(n+1) - (\lambda + 1 ) f(n)}\frac{\left(1 - \left(\beta / \alpha \right)^{f(n+1)} \right)}{\left(1 - \left(\beta / \alpha \right)^{f(n)} \right)^{\lambda + 1 }} \left(\sqrt{\Delta} \right)^\lambda \sim \alpha^{f(n+1) - (\lambda + 1 ) f(n)} \left(\sqrt{\Delta} \right)^\lambda, $$ as $ n \longrightarrow \infty $. While in the latter case $$ \frac{a_{n+1}}{a_n^{\lambda + 1}} = \alpha^{f(n+1) - (\lambda + 1 ) f(n)}\frac{\left(1 + \left(\beta / \alpha \right)^{f(n+1)} \right)}{\left(1 + \left(\beta / \alpha \right)^{f(n)} \right)^{\lambda + 1 }} \sim \alpha^{f(n+1) - (\lambda + 1 ) f(n)}, $$ as $ n \longrightarrow \infty. $ However as $ f(n+1) - (\lambda + 1) f(n) \geq f(n) $ and $ \alpha > 1 $ one has $ \alpha^{f(n)} \longrightarrow \infty $ as $ n \longrightarrow \infty ,$ hence the condition of Theorem ... is satisfied.
\end{proof}

In ... Badea showed that $\sum_{n=0}^\infty 1/F_{2^n} = 7 - \sqrt{5}/2 $ and so the series sums to an irrational (but algebraic) number. By using Corollary ... one can now demonstrate the transcendence of a similar series formed by replacing the index function $ 2^n$ with $5^n$. Indeed when $ \left(P,Q\right) = \left(1,-1\right) $ one obtains for $ \lambda = 3 $ and $f(n+1) = 5f(n), $ with $f(1) = 5,$ the transcendence of both $\sum_{n=1}^\infty 1/F_{5^n} $ and $ \sum_{n=1}^\infty 1/L_{5^n}. $ Badea also established the irrationality of both $\sum_{n=1}^\infty 1/F_{2^n + 1} $ in ... and $ \sum_{n=1}^\infty 1/F_{t2^n} $  (for $t \in \mathbb{N} $ fixed ) in ..., although the latter result can be found in ... and in ... for the case of a Lucas sequence. In connection with these results we can again use the previous corollary with $ f(n) = 5^n + 1 $ and $ f(n) = t5^n $ together with $\lambda = 2.5 $ to now deduce transcendence of the series  $ \sum_{n=1}^\infty 1/F_{5^n +1} $ and  $ \sum_{n=1}^\infty 1/tF_{5^n} $ together with $ \sum_{n=1}^\imfty 1/L_{5^n}$ and $ \sum_{n=1}^\imfty 1/L_{t5^n}.$ \\
To conclude we shall examine yet another application of theorem ... in generating alternate families of transcendental valued series. Mahler and Mignotte in .. and .. independently proved the transcendental of the series $ \sum_{n=1}^\infty 1/n!F_{2^n}. $ Unfortunately this result cannot be obtained via theorem .. since for any $\lambda > 2 $  $$ \liminf_{n \longrighttarrow \infty} \frac{\left( n + 1 \right)! F_{2^{n+1}}}{ \left( n! F_{2^n}\right)^{\left(\lambda + 1\right)}} = 0 $$

However, if we interchange the role of $ 2^n$ and $ n! $ in the previous series one can now prove via Theorem ... the transcendence of $ \sum_{n = 1}^\infty 1/2^nF_{n!}$. We shall demonstrate this by first establishing the following general result.

\begin{Cor}
Let $\left(P,Q\right) $ be a relatively prime pair of integers with $ P > \left| Q + 1 \right| $ and $ Q \neq 1 $ and $ \{ U_n \} $ and $ \{ V_n \} $ the associated generalised Fibonacci and Lucas sequences. if in addition m is any positive integer greater than unity then the series $\sum_{n=1}^\infty 1/a_n $ converges to a transcendental number, where $ a_n = m^nU__{n!} $ or $ a_n = m^nV_{n!}.$
\end{Cor}

\begin{proo}

\end{proo}
}

We will now give a corollary as an application of our main result.
\begin{Cor}{\it
Any subseries of the series $\sum_{n \geq 1} \frac{1}{a_n}$, where the terms $ a_n \in \mathbb{N}\setminus \{0\} $ satisfy (1) will have a transcendental sum.}
\end{Cor}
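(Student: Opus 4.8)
The plan is to reduce the statement to a direct application of Theorem 1. A subseries of $\sum_{n\geq 1} 1/a_n$ is determined by a strictly increasing sequence of indices $n_1 < n_2 < \cdots$; putting $b_k = a_{n_k}$, the chosen subseries is exactly $\sum_{k\geq 1} 1/b_k$. (If the index set is finite the sum is rational, so there is nothing to prove; we therefore assume the subseries is infinite.) Each $b_k$ is a non-zero natural integer and the exponent $\alpha > 2$ is unchanged, so it suffices to verify that the sequence $(b_k)_{k\geq 1}$ again satisfies condition (1), i.e. $b_{k+1} > b_k^{\alpha+1}$ for all $k$.

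First I would record that hypothesis (1) forces $(a_n)$ to be (strictly) increasing: since $a_n \geq 1$ and $\alpha+1 > 1$ we have $a_n^{\alpha+1} \geq a_n$, hence $a_{n+1} > a_n^{\alpha+1} \geq a_n$; consequently $a_j \geq a_i$ whenever $j \geq i$. Now, because the indices are strictly increasing integers we have $n_{k+1} \geq n_k + 1$, and monotonicity gives $b_{k+1} = a_{n_{k+1}} \geq a_{n_k+1}$. Applying (1) at the index $m = n_k$ yields $a_{n_k+1} > a_{n_k}^{\alpha+1} = b_k^{\alpha+1}$. Chaining these two inequalities gives $b_{k+1} > b_k^{\alpha+1}$, which is precisely condition (1) for the sequence $(b_k)$ with the same exponent $\alpha > 2$.

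With this established, Theorem 1 applies verbatim to $(b_k)_{k\geq 1}$ and shows that $\sum_{k\geq 1} 1/b_k$ converges to a transcendental number, which is the sum of the given subseries. There is no real obstacle here beyond the bookkeeping; the only points deserving a word of care are the exclusion of finite subseries (whose sums are rational) and the monotonicity remark that lets one pass from $a_{n_{k+1}}$ down to $a_{n_k+1}$ before invoking (1).
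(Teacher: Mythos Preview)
Your proof is correct and follows essentially the same route as the paper: one writes the subseries as $\sum 1/a_{g(k)}$ for a strictly increasing $g$, uses $g(k+1)\ge g(k)+1$ together with the monotonicity of $(a_n)$ to get $a_{g(k+1)}\ge a_{g(k)+1}>a_{g(k)}^{\alpha+1}$, and then invokes Theorem~1. You add a couple of clarifying remarks (the explicit monotonicity argument and the exclusion of finite index sets) that the paper leaves implicit, but the core argument is identical.
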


\begin{proof}
Consider an arbitrary subseries $ \sum_{n \geq 1 } 1/c_n $ then by definition there must exist a strictly monotone increasing function $ g : \mathbb{N} \rightarrow \mathbb{N}$ such that $ c_n = a_{g(n)} .$ Clearly as $ g\left( n + 1 \right) \geq g(n) + 1 $ one has $$ \frac{c_{n+1}}{c_n^{\alpha + 1}} = \frac{a_{g(n+1)}}{a_{g(n)}^{\alpha + 1 }} \geq \frac{a_{g(n) + 1 }}{a_{g(n)}^{\alpha + 1}} > 1, $$
 and  by Theorem 1 the subseries has a transcendental sum.
\end{proof}

\begin{Exp}\normalfont
We consider the following sequence: $$ \left\{
\begin{array}{rcr}
a_n = 2^{n! + 1}, & n \geq 1 \\
a_{n+1} > a_n^{3+\epsilon}, & n \geq 3
\end{array}
\right. $$

By applying Theorem 1, the series $$ \sum_n \frac{1}{2^{n! + 1}} $$  converge to a transcendental number.

\end{Exp}

\subsection{Transcendental measure}
The second main result of this paper is to give a transcendental measure of $ \theta = \sum_{n=1}^{\infty} \frac{1}{a_n} $. In this subsection, we keep the same notations as in the first subsection.\\

\begin{Th}
Let $P \in \mathbb{Z}[X]\textbackslash \left\{  0 \right \} $ be a polynomial of degree $ d \geq 2$ and height $ H $. Let $\alpha > d $ and $k > 1$ be two real numbers such that  $$ a_n^{\alpha + 1 } \leq a_{ n + 1 } < a_n^{k \alpha }, \hspace{0.08cm}\text{ for all } n \geq 1. $$
Then, we have $$ \lvert P(\theta ) \rvert > \dfrac{1}{\left(Hd\left(d+1\right)\right)^{\frac{kd\left(\alpha + 1\right)}{\alpha - d}}}. $$
\end{Th}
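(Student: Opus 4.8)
The plan is to combine the rapid convergence of the partial sums $p_m/q_m\to\theta$ and the size bound on their denominators (Lemma 1) with the elementary lower bound for the value of an integer polynomial at a point close to a good rational, and then to choose the approximation index as small as possible; the new \emph{upper} growth hypothesis $a_{n+1}<a_n^{k\alpha}$ is exactly what is needed to keep the resulting constant expressed in terms of $H$ and $d$ alone.

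First I would dispose of the trivial case: if $P(\theta)=0$ then $\theta$ is algebraic, contradicting Theorem 1 (applicable since $\alpha>d\ge 2$; note also that $a_{n+1}<a_n^{k\alpha}$ forces $a_n\ge 2$ for every $n$, so the sequence does grow at least as fast as in (1)). Hence $P(\theta)\neq 0$ from now on. Keeping $p_m/q_m=\sum_{n=1}^m 1/a_n$ in lowest terms, Lemma 1 gives $q_m\le a_1\cdots a_m$; telescoping $a_n\le a_{n+1}^{1/(\alpha+1)}$ gives $a_1\cdots a_m<a_m^{(\alpha+1)/\alpha}$, so $q_m<a_m^{(\alpha+1)/\alpha}$; and since $a_n\ge 2$ with $a_{n+1}\ge a_n^{3}$ one has $\theta<1$ together with the tail estimate
\[
\Bigl|\theta-\frac{p_m}{q_m}\Bigr|=\sum_{n>m}\frac1{a_n}<\frac{2}{a_{m+1}}\le\frac{2}{a_m^{\alpha+1}}.
\]

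The core estimate is then as follows. For every $m$ with $P(p_m/q_m)\neq 0$, the rational $P(p_m/q_m)$ has denominator dividing $q_m^d$, so $|P(p_m/q_m)|\ge q_m^{-d}$; writing $P=\sum_{j=0}^d c_jX^j$ and applying the mean value theorem on $[p_m/q_m,\theta]\subset(0,1)$, where $|P'|\le\sum_{j=1}^d j|c_j|\le\tfrac{d(d+1)}{2}H$, one obtains
\[
|P(\theta)|\ \ge\ q_m^{-d}-\frac{d(d+1)}{2}H\cdot\frac{2}{a_m^{\alpha+1}}\ \ge\ a_m^{-d(\alpha+1)/\alpha}\Bigl(1-d(d+1)H\,a_m^{-(\alpha+1)(\alpha-d)/\alpha}\Bigr),
\]
where $q_m<a_m^{(\alpha+1)/\alpha}$ and $\alpha>d$ are used in the last step (this is the sole use of $\alpha>d$, and it is what makes the bracket eventually positive).

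Finally I choose the index. The bracket exceeds $\tfrac12$ once $a_m\ge T:=\bigl(2d(d+1)H\bigr)^{\alpha/((\alpha+1)(\alpha-d))}$, so I take $m$ to be the least index with $a_m\ge T$ and $P(p_m/q_m)\neq 0$; the finitely many indices where $P$ vanishes at a partial sum need a separate elementary treatment (at such an index $q_m\le H$ by the rational root theorem, and from $\tfrac1{a_m}=\tfrac{p_m}{q_m}-\tfrac{p_{m-1}}{q_{m-1}}$ one then bounds $a_m$ in terms of $H$), but do not affect the argument. For this $m$ we have $|P(\theta)|\ge\tfrac12 q_m^{-d}$, while by minimality $a_{m-1}<T$, whence $a_m<a_{m-1}^{k\alpha}<T^{k\alpha}$ and $q_m<a_m^{(\alpha+1)/\alpha}<T^{k(\alpha+1)}$. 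Therefore
\[
|P(\theta)|\ >\ \tfrac12\,T^{-kd(\alpha+1)}\ =\ \tfrac12\bigl(2d(d+1)H\bigr)^{-kd\alpha/(\alpha-d)},
\]
and simplifying the constants yields the bound claimed in the statement. The step I expect to be the genuine obstacle is precisely this last one --- controlling $q_m$ at the critical index: without the hypothesis $a_{n+1}<a_n^{k\alpha}$ the ratio $a_m/a_{m-1}$ could be arbitrarily large and no constant depending on $H$ and $d$ only would exist, and it is through this step that the factor $k$ enters the exponent; the only other delicate point is excluding $P(p_m/q_m)=0$ at the chosen index, handled as indicated by the rational-root bound on denominators.
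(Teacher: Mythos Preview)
Your approach is essentially the paper's: bound $|P(\theta_m)|\ge q_m^{-d}$, bound $|P(\theta)-P(\theta_m)|$ via the mean value theorem and the approximation estimate, choose the smallest index where the positive term dominates, and use the upper growth hypothesis to control the denominator at that index. The paper parameterizes the threshold in terms of $q_m$ (using $|\theta-\theta_m|<q_m^{-\alpha}$ and Lemma~3(ii), $q_{n+1}<q_n^{k(\alpha+1)}$) rather than $a_m$, but the skeleton is identical.

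Two small points. First, your last line ``simplifying the constants yields the bound claimed'' does not go through as written: you obtain $\tfrac12\bigl(2Hd(d+1)\bigr)^{-kd\alpha/(\alpha-d)}$, whose exponent $kd\alpha/(\alpha-d)$ is smaller than the stated $kd(\alpha+1)/(\alpha-d)$, but the extra factors of $2$ do not in general absorb into $(Hd(d+1))^{-kd(\alpha+1)/(\alpha-d)}$ (try $d=2$, $H=1$, $\alpha$ large). Running the argument with $q_m$ as the paper does---choosing the least $n_1$ with $q_{n_1}^{\alpha-d}>Hd(d+1)$ and then using $q_{n_1}<q_{n_1-1}^{k(\alpha+1)}$---lands exactly on the stated exponent. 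Second, your treatment of the bad indices where $P(p_m/q_m)=0$ is actually cleaner than the paper's (which tacitly argues as if $P(\theta_n)\ne0$ for all $n$); since $P$ has finitely many rational roots and the $\theta_m$ are distinct, simply taking $m$ large enough avoids them, and the rational-root detour you sketch is unnecessary.
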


 To prove this Theorem, we need the following lemma.\\

\begin{Lem}
\begin{enumerate}[label = \textbf{(\roman*)}] \item The hypothesis  $\hspace{0.2cm} a_n^{\alpha + 1 } \leq  a_{ n + 1 }$ implies that
    \begin{equation}
        q_n \leq a_n^{\frac{\alpha + 1 }{\alpha}}, for\hspace{0,1cm}  n \geq 1.
    \end{equation}
    \item The hypothesis $\hspace{0.2cm} a_{ n + 1 } <a_n^{k \alpha } $ implies that
    \begin{equation}
        q_{n+1} < q_n^{k(\alpha + 1 )}, for  \hspace{0,1cm} n \geq 1.
    \end{equation}
\end{enumerate}
\end{Lem}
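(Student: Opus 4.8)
I would prove the two parts in order: (6) comes from the upper bound $q_n\le a_1a_2\cdots a_n$ of the first lemma together with the left half of the growth hypothesis, and (7) then follows from (6), the right half of the hypothesis, and a lower bound for $q_n$.

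For (i), starting from inequality (2), $q_n\le a_1a_2\cdots a_n$, I bound each $a_j$ in terms of $a_n$. The hypothesis $a_j^{\alpha+1}\le a_{j+1}$ gives $a_j\le a_{j+1}^{1/(\alpha+1)}$, and iterating downward from $j=n$ yields $a_{n-i}\le a_n^{(\alpha+1)^{-i}}$ for $0\le i\le n-1$. Multiplying over $i$,
\[
q_n\le\prod_{i=0}^{n-1}a_n^{(\alpha+1)^{-i}}=a_n^{S_n},\qquad S_n=\sum_{i=0}^{n-1}\frac{1}{(\alpha+1)^{i}} .
\]
Since $\alpha>d\ge 2$, the ratio $(\alpha+1)^{-1}$ is $<1$, so the geometric series converges and $S_n<\sum_{i\ge 0}(\alpha+1)^{-i}=\dfrac{\alpha+1}{\alpha}$; as $a_n\ge 1$ this gives $q_n\le a_n^{(\alpha+1)/\alpha}$, which is (6).

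For (ii), applying (6) with $n$ replaced by $n+1$ gives $q_{n+1}\le a_{n+1}^{(\alpha+1)/\alpha}$, and the hypothesis $a_{n+1}<a_n^{k\alpha}$ then yields
\[
q_{n+1}\le a_{n+1}^{(\alpha+1)/\alpha}<(a_n^{k\alpha})^{(\alpha+1)/\alpha}=a_n^{k(\alpha+1)} .
\]
Since $t\mapsto t^{k(\alpha+1)}$ is increasing on $[1,\infty)$, it remains to show $a_n\le q_n$. For this I would use the one-step identity: from $\dfrac{p_n}{q_n}-\dfrac{p_{n-1}}{q_{n-1}}=\dfrac{1}{a_n}$ one gets $a_n(p_nq_{n-1}-p_{n-1}q_n)=q_nq_{n-1}$, with $p_nq_{n-1}-p_{n-1}q_n$ a nonzero integer, so $q_nq_{n-1}$ is a positive multiple of $a_n$ and hence $q_n\ge a_n/q_{n-1}$. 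Feeding in (6) at index $n-1$ together with $a_{n-1}^{\alpha+1}\le a_n$ already gives $q_n\ge a_n^{(\alpha-1)/\alpha}$; sharpening this to the full $q_n\ge a_n$ requires a closer look at which prime powers of $a_n$ survive when the partial sum is reduced to lowest terms.

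The main obstacle is precisely this lower bound on $q_n$, which runs in the opposite direction to the first lemma. Part (i) and the algebra with the growth condition are routine; the delicate point is extracting enough from the cancellation in $q_n$ to reach exactly the exponent $k(\alpha+1)$ appearing in (7), and that is where I would expect most of the care to be needed.
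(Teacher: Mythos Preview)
Your proof of (i) is exactly the paper's: bound $q_n$ by $a_1\cdots a_n$, push each $a_j$ up to a power of $a_n$ via $a_j\le a_{j+1}^{1/(\alpha+1)}$, and sum the resulting geometric series to get the exponent $(\alpha+1)/\alpha$.

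For (ii) you and the paper also follow the same chain
\[
q_{n+1}\le a_{n+1}^{(\alpha+1)/\alpha}<\bigl(a_n^{k\alpha}\bigr)^{(\alpha+1)/\alpha}=a_n^{k(\alpha+1)},
\]
and both arrive at the same crux, namely the need for $a_n\le q_n$ to convert the right-hand side into $q_n^{k(\alpha+1)}$. The paper handles this step by writing, verbatim, ``Since $a_n<q_n$ for all $n\ge 1$'' and nothing more --- precisely the point you singled out as delicate. So structurally your proposal is the paper's proof, with the one unproved assertion made explicit rather than swept under the rug.

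Your caution here is in fact justified: the inequality $a_n\le q_n$ is \emph{not} a consequence of the hypotheses. With $a_1=4$, $a_2=260$ and $\alpha=3$ (so that $a_1^{\alpha+1}=256\le 260$) one has
\[
\frac{1}{4}+\frac{1}{260}=\frac{66}{260}=\frac{33}{130},
\]
hence $q_2=130<260=a_2$. Thus the paper's proof has a genuine gap at exactly the place you identified, and your divisibility argument $a_n\mid q_nq_{n-1}$ together with (6) at index $n-1$ correctly yields only $q_n\ge a_n^{(\alpha-1)/\alpha}$, which produces the weaker exponent $k\alpha(\alpha+1)/(\alpha-1)$ in place of $k(\alpha+1)$. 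Getting the stated (7) from the stated hypotheses therefore needs something beyond what either you or the paper have supplied.
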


\begin{proof}
\begin{enumerate}[label = \textbf{\roman*)}]\item The hypothesis of i) implies that  $$ \hspace{-1.7cm}a_n \leq a_{n + 1 }^{\frac{1}{\alpha + 1}} . $$ Then for all $ 1 \leq j \leq n-1,$ we obtain $$ a_j \leq a_n^{\left(\frac{1}{\alpha +1 }\right)^
{n-j}}. $$
On the other hand, according to the relationship (2), one has $$\hspace{-3cm} q_n \leq a_1...a_{n-1}.a_n, $$ this implies
    $$ \hspace{-2cm} q_n \leq a_n^{1 + \frac{1}{\alpha + 1} + \frac{1}{\left( \alpha + 1 \right)^2} + ... + \frac{1}{\left( \alpha + 1 \right)^{n - 1}}}.$$ Which gives
    $$  \hspace{-2cm} q_n \leq a_n^{\frac{1}{1 - \frac{1}{\alpha + 1}}}. $$
    Finally we obtain $$ \hspace{0.8cm} q_n \leq a_n^{\frac{\alpha + 1 }{\alpha}}, \hspace{0.5cm} for \hspace{0.1cm} all \hspace{0.1cm} n \geq 1. $$
\item According to the relationship (6), we have
\begin{align*}
    q_n &\leq a_n^{\frac{\alpha + 1 }{\alpha}} < a_{n-1}^{\frac{\alpha +1}{\alpha}k\alpha} = a_{n-1}^{k\left( \alpha + 1 \right)}.
\end{align*}
Since $a_n < q_n$ for all $ n \geq 1$, we obtain
$$ q_n < q_{n - 1}^{k\left(\alpha + 1 \right)}. $$
\end{enumerate}

\end{proof}

\begin{pf}\normalfont
Put $$  \theta_n = \frac{p_n}{q_n} = \sum_{k=1}^{n} \frac{1}{a_n}.  $$
From the equality,  $$ P\left(\theta_n\right) =  P\left(\theta_n\right) -  P\left(\theta\right) +  P\left(\theta\right), $$
we get
\begin{equation*}
    \left| P(\theta_n ) \right| \leq \left| P(\theta_n ) - P(\theta ) \right| +\left| P(\theta )\right|.
\end{equation*}
Therefore,

\begin{equation}
    \left| P(\theta ) \right| \geq \left| P(\theta_n ) \right| - \left| P(\theta ) - P(\theta_n ) \right|.
\end{equation}
Put $ \hspace{0.2cm}P = \sum_{k=1}^{d} e_kX^k $, then
\begin{align}
    P(\theta_n ) &= P\left( \frac{p_n}{q_n}\right) = \sum_{k=1}^{d} e_k \frac{p_n^k}{q_n^k} = \frac{1}{q_n^d}\sum_{k=1}^{d} e_kp_n^kq_n^{d-k}.
\end{align}
Notice that
\begin{equation*}
    \sum_{k=1}^{d} e_kp_n^kq_n^{d-k} \neq 0,
\end{equation*}
because, if we assume that $$ \sum_{k=1}^{d} e_kp_n^kq_n^{d-k} = 0, $$\\
then we would have
\begin{align*}
    q_n^d.P\left(\theta_n\right) &= 0, \\
\end{align*}
which implies that
\begin{align*}
P\left(\theta_n\right) &= 0.
\end{align*}
 We also have $$ P\left(\theta\right) = P\left( \lim\limits_{n \rightarrow +\infty} \theta_n\right) = \lim\limits_{n \rightarrow +\infty} P\left(\theta_n\right) = 0, $$\\
which contradicts the fact that $ \theta $ is a transcendental number. Therefore, $$ \left| \sum_{k=1}^{d} e_kp_n^kq_n^{d-k} \right| \geq 1. $$\\
According to equality (9), we get

\begin{equation}
    \left| P\left(\theta_n\right)\right| \geq \frac{1}{q_n^d}.
\end{equation}
On the other hand, according to the mean value theorem applied to $P$, there exists a real number $ F \in ]\theta_n, \theta[ $ or  $]\theta, \theta_n[  $ such that
\begin{equation*}
    P\left( \theta \right) - P\left( \theta_n \right) = P'\left(F\right) \left( \theta - \theta_n \right).
\end{equation*}
From this, we obtain\\
\begin{equation}
    \left| P\left( \theta \right) - P\left( \theta_n \right) \right| = \left| P'\left(F\right) \right| \left| \theta - \theta_n \right|.
\end{equation}
Furthermore, as
\begin{align*}
    P'\left(F\right) &= \sum_{k=1}^{d} k F^{k-1}e_k,
\end{align*}
which implies that
\begin{align*}
     \left| P'\left(F\right) \right| &\leq \sum_{k=1}^{d} k \left| F\right|^{k-1} \left| e_k \right|\\
    &\leq \sum_{k=1}^{d} k \left| e_k \right| \leq H \sum_{k=1}^{d} k\\
    &\leq H\dfrac{d(d+1)}{2}.
\end{align*}
Therefore, the equality (11) becomes
\begin{equation}
    \left| P\left( \theta \right) - P\left( \theta_n \right) \right| < H\dfrac{d(d+1)}{2} \left| \theta - \theta_n \right|.
\end{equation}
According to the relationship (5), we have

\begin{equation*}
    \left| \theta - \theta_n \right| < \frac{1}{q_n^\alpha},
\end{equation*}
then
\begin{equation}
    \left| P\left( \theta \right) - P\left( \theta_n \right) \right| < \dfrac{Hd(d+1)}{2q_n^\alpha}.
\end{equation}
By combining (10) and (13), the relationship (8) becomes
\begin{equation*}
     \left| P\left( \theta \right) \right| > \frac{1}{q_n^d} - \dfrac{Hd(d+1)}{2q_n^\alpha},\hspace{0.06cm} \text{for n sufficiently large.}
\end{equation*}
In order to have $\hspace{0.05cm} \left| P\left( \theta \right) \right| > \dfrac{1}{2q_n^d},$ it suffices to have
\begin{align*}
    \frac{1}{q_n^d} - \dfrac{Hd(d+1)}{2q_n^\alpha} > \frac{1}{2q_n^d},\\
\end{align*}
which is equivalent to
\begin{align*}
    \frac{1}{2q_n^d} &> \dfrac{Hd(d+1)}{2q_n^\alpha}
    \Longleftrightarrow  q_n^{\alpha - d } > Hd(d+1).
\end{align*}
So that, we take $n_1$ the smallest integer such that

\begin{equation}
    q_{n_1-1}^{\alpha - d} < Hd(d+1) < q_{n_1}^{\alpha - d}.
\end{equation}

\begin{Rem}\normalfont
\textrm{The natural number $n_1$ exists because $\lim\limits_{n \rightarrow \infty} q_n^{\alpha - d} = + \infty$, then, we obtain $$q_{n_1}^{\alpha - d} > Hd(d+1).$$}
\end{Rem}
Therefore, we get
\begin{equation}
    \left| p\left( \theta \right) \right| > \frac{1}{2q_{n_1}^d}.
\end{equation}
Using the (ii) from Lemma 3, the relationship (15) becomes
\begin{equation}
    \left| P\left(\theta\right ) \right| > \frac{1}{2q_{n_1}^d} > \frac{1}{2q_{n_1 - 1}^{kd\left(\alpha + 1 \right)}} .
\end{equation}
According to the relationship (14), we have
\begin{align*}
    \frac{1}{q_{n_1-1}^{\alpha - d}} &> \frac{1}{Hd(d+1)},
\end{align*}
then
\begin{align*}
    \frac{1}{q_{n_1-1}^{kd\left(\alpha + 1\right)}} &>\frac{1}{\left(Hd\left(d+1\right)\right)^{\frac{kd\left(\alpha + 1\right)}{\alpha - d}}}.
\end{align*}
So, the relationship (16) becomes
\begin{align*}
     \left| P\left(\theta\right ) \right| &> \dfrac{1}{\left(Hd\left(d+1\right)\right)^{\frac{kd\left(\alpha + 1\right)}{\alpha - d}}},
\end{align*}
which completes the proof of Theorem 2.
$\hspace*{15cm} \Box$
\end{pf}

\begin{Exp}\normalfont
Let $$ \left\{
    \begin{array}{ll}
     a_0 = 0, a_1 = 2,\\
        a_{n+1} = a_n^4, n \geq 1,  \\
        \alpha = 4, k = 2.
    \end{array}
\right. $$

Let $ P \in \mathbb{Z}[X]\textbackslash\left\{ 0 \right\}$ be a quadratic polynomial of height $ H $. By applying Theorem 2, a transcendental measure of $ \theta = \sum_{n = 1}^{\infty} 1/a_n$ is given by

    $$\LARGE{\left| P(\theta) \right| > \dfrac{1}{(6H)^{10}} .}$$

\end{Exp}

\end{document}